\DeclareMathAlphabet{\pazocal}{OMS}{zplm}{m}{n}
\titleformat*{\section}{\LARGE\bfseries}
\titleformat*{\subsection}{\Large\bfseries}
\titleformat*{\subsubsection}{\large\bfseries}
\newtheoremstyle{case}{}{}{}{}{}{:}{ }{}
\theoremstyle{case}
\newcommand{\be}{\begin{equation}}
\newcommand{\ee}{\end{equation}}
\newcommand{\ben}{\begin{eqnarray*}}
\newcommand{\een}{\end{eqnarray*}}
\newtheorem{examp}{\sc example}
\newtheorem{remk}{\sc remark}
\newtheorem{corol}{\sc corollary}
\newtheorem{lemma}{\sc lemma}
\newtheorem{theorem}{\sc theorem}
\newtheorem{defn}{\sc definition}
\newcommand{\bt}{\begin{theorem}}
\newcommand{\et}{\end{theorem}}
\newcommand{\bl}{\begin{lemma}}
\newcommand{\el}{\end{lemma}}
\newcommand{\bed}{\begin{defn}}
\newcommand{\eed}{\end{defn}}
\newcommand{\brem}{\begin{remk}}
\newcommand{\erem}{\end{remk}}
\newcommand{\bex}{\begin{examp}}
\newcommand{\eex}{\end{examp}}
\newcommand{\bcl}{\begin{corol}}
\newcommand{\ecl}{\end{corol}}
\newcommand{\NI}{\noindent}
\theoremstyle{definition}
\theoremstyle{remark}
\numberwithin{equation}{section}
\numberwithin{theorem}{section}
\numberwithin{lemma}{section}
\begin{document}

\title{\large\bf\sc More on semipositive tensor and tensor complementarity problem}

\author{%A. Dutta$^{a,1}$, 
R. Deb$^{a,1}$ and A. K. Das$^{b,2}$\\
\emph{\small $^{a}$Jadavpur University, Kolkata , 700 032, India.}\\	
\emph{\small $^{b}$Indian Statistical Institute, 203 B. T.
	Road, Kolkata, 700 108, India.}\\
%\emph{\small $^{1}$Email: rwitamjanaju@gmail.com}\\
%\emph{\small $^{1}$Email: aritradutta001@gmail.com}\\
\emph{\small $^{1}$Email: rony.knc.ju@gmail.com}\\
\emph{\small $^{2}$Email: akdas@isical.ac.in}\\
}

\date{}

\maketitle

\begin{abstract}
\noindent In recent years several classes of structured matrices are extended to classes of tensors in the context of tensor complementarity problem. The tensor complementarity problem is a class of nonlinear complementarity problem where the involved functions are special polynomials defined by a tensor. Semipositive and strictly semipositive tensors play an important role in the study of the tensor complementarity problem. The article considers some important properties of semipositive tensor. We establish invariance property of semipositive tensor. We prove necessary and sufficient conditions for a tensor to be semipositive tensor. A relation between even order row diagonal semipositive tensor and its majorization matrix is proposed.\\

\noindent{\bf Keywords:} Tensor complementarity problem, semipositive tensor, semimonotone matrix, null vector, majorization matrix.
\\

\noindent{\bf AMS subject classifications:} 90C33, 47H60, 15A69, 46G25.
\end{abstract}
\footnotetext[1]{Corresponding author}

\section{Introduction}

A tensor is a multidimensional array which is a natural extension of matrices. A real tensor of order $r$ and dimension $n$, $\mathcal{M}= (m_{i_1 ... i_r}) $ is a multidimensional array of entries $m_{i_1 ... i_r} \in \mathbb{R}$ where $i_j \in I_n$ with $j\in I_r.$ Here $I_n$ denotes the set $I_n=\{1,2,...,n\}.$ $\mathbb{T}_{r,n}$ denotes the set of real tensors of order $m$ and dimension $n.$ %Any $\mathcal{A}= (a_{i_1 i_2 ... i_m}) \in T_{m,n} $ is called a symmetric tensor, if the entries $a_{i_1 i_2 ... i_m}$ are invariant under any permutation of their indices. $S_{m,n}$ denotes the collection of all symmetric tensors of order $m$ and dimension $n$  where $m$ and $n$ are two given positive integers with $m,n\geq 2$.
Tensors have many application in science and engineering. The common applications are found in electromagnetism, continuum mechanics, quantum mechanics and quantum computing, spectral hypergraph theory, diffusion tensor imaging, image authenticity verification problem, optimization theory and in many other areas. In optimization theory the tensor complementarity problem (TCP) was proposed by Song and Qi \cite{song2017properties}, which is a natural extension of the linear complementarity problem and a subclass of the nonlinear complementarity problem where the involved functions are defined by a tensor. 
\NI Given a square matrix $A$ of order $n$ with real entries and an $n$ dimensional vector $q$, the linear complementarity problem \cite{cottle2009linear} is to find $n$ dimensional vectors $w$ and $z$ satisfying
\begin{equation}\label{linear comp equation}
 z\geq 0,\;\; w = Az + q \geq 0,\;\; z^T w = 0.
\end{equation}
The problem is denoted by LCP$(q,A)$ and the solution set of LCP$(q,A)$ is denoted by SOL$(q,A).$
Several matrix classes arising in linear complementarity problem are important due to computational point of view. For details see \cite{neogy2006some}, \cite{neogy2013weak}, \cite{neogy2005almost}, \cite{neogy2011singular}, \cite{jana2019hidden}, \cite{jana2021more}, \cite{neogy2009modeling}, \cite{das2017finiteness}. For details of game theory see \cite{mondal2016discounted}, \cite{neogy2008mathematical}, \cite{neogy2008mixture}, \cite{neogy2005linear}, \cite{neogy2016optimization}, \cite{das2016generalized} and for details of QMOP see \cite{mohan2004note}. Even several matrix classes arise during the study of Lemke's algorithm as well as principal pivot transform. For details see \cite{mohan2001classes}, \cite{mohan2001more} \cite{neogy2005principal}, \cite{das2016properties}, \cite{neogy2012generalized}, \cite{jana2019hidden}, \cite{jana2021more}, \cite{jana2018processability}.

%For a tensor $\mathcal{M}\in \mathbb{T}_{r,n} $ and $u\in \mathbb{R}^n,~ \mathcal{M}u^{r-1}\in \mathbb{R}^n $ is a vector defined by $(\mathcal{M}u^{r-1})_i = \sum_{i_2, ...i_r =1}^{n} m_{i i_2  ...i_r} u_{i_2} \cdot\cdot \cdot u_{i_r} , \;\forall \; i \in I_n,$ and $\mathcal{M}u^r\in \mathbb{R} $ is a scalar defined by $ u^T \mathcal{M}u^{r-1} = \mathcal{M}u^r = \sum_{i_1, ...i_r =1}^{n} m_{i_1  ...i_r} u_{i_1} \cdot\cdot \cdot u_{i_r}.$
\NI Given a tensor $\mathcal{M}\in\mathbb{T}_{r,n}$ and an $n$ dimensional real vector $q$, the tensor complementarity problem is to find $n$ dimensional vectors $\omega$ and $u$ satisfying
	\begin{equation}\label{tensor comp equation}
	u\geq 0,\;\; \omega =\mathcal{M}u^{r-1}+q \geq 0,\;\;  u^T \omega= 0.
	\end{equation}
The problem is denoted by TCP$(q,\mathcal{M}).$ The solution set of TCP$(q,\mathcal{M})$ is denoted by SOL$(q,\mathcal{M}).$
Tensor complementarity problem arises in optimization theory, game theory and in other areas. Many practical problems can be modeled as forms of equivalent tensor complementarity problem like a class of multi-person noncooperative game \cite{huang2017formulating}, hypergraph clustering problem and traffic equilibrium problem \cite{huang2019tensor}. Tensor classes play an important role to study the theory of TCP. In recent years many structured tensors are developed and studied in context of TCP. For details see \cite{qi2005eigenvalues, song2014properties, song2015properties, song2016properties, palpandi2021tensor, luo2017sparsest}. In linear complementarity problem the class of semimonotone matrices play an important role. Several matrix theoretic properties and properties related to solution of linear complementarity problem involving semimonotone matrices were studied in literature. For details see \cite{karamardian1972complementarity, pang1979onq, eaves1971linear, danao1997note, tsatsomeros2019semimonotone}.
Song and Qi \cite{song2014properties} extended the class of semimonotone matrices to semipositive tensors in context of tensor complementarity problem. Song and Qi \cite{song2014properties} proved that a strictly semipositive tensor is a $Q$ tensor. Song and Qi \cite{song2016tensor} showed that for a (strictly) semipositive tensor $\mathcal{A}$ the TCP$(q,\mathcal{M})$ has a unique solution for every $q>0\; (q \geq 0).$ Many other tensor theoretic properties as well as properties of solution of TCP with semipositive tensor were discussed.
For details see \cite{song2014properties, song2016tensor, song2016properties, guo2019properties, zheng2018class}. 

\NI In this paper, we study some tensor theoretic properties of semipositive tensor as well as strictly semipositive tensor. We discuss some necessary and sufficient condition for (strictly) semipositive tensor. We prove an equivalence in connection with the majorization matrix of an even order row diagonal (strictly) semipositive tensor.

%We establish some tensor theoretic properties of (strictly) semipositive tensor. We study the invariance  We propose an equivalence between a even order row diagonal semipositive tensor and a semimonotone matrix. 

The paper is organised as follows. Section 2 contains some basic notations and results. In Section 3, we investigate some tensor theoretic properties of semipositive tensor. We establish invariance property of semipositive tensor. We propose necessary and sufficient condition for semipositive tensor. We establish connection between an even order row diagonal semipositive tensor and its majorization matrix.

\section{Preliminaries}

Now we introduce some basic notations used in this paper. We consider vectors, matrices and tensors with real entries. For any positive integer $n,$ let $I_n$ denote the set $\{ 1, 2,...,n \}$. Let $\mathbb{R}^n$ denote the $n$-dimensional Euclidean space and $\mathbb{R}^n_+ =\{ u\in \mathbb{R}^n : u\geq 0 \}$, $\mathbb{R}^n_{+ +} =\{ u\in \mathbb{R}^n : u> 0 \}$. Any vector $u\in \mathbb{R}^n$ is a column vector and $u^T$ denotes the row transpose of $u.$ A diagonal matrix $D=[d_{ij}]_{n \times n}=diag(d_1, \; d_2,\; ..., \; d_n)$ is defined as $d_{ij}=\left \{ \begin{array}{ll}
	  d_i  &;\; \forall \; i=j, \\
	  0  &; \; \forall \; i \neq j.
	   \end{array}  \right.$
	   
\begin{defn}\cite{pang1979onq, cottle1968problem}
A matrix $M\in \mathbb{R}^{n\times n}$ is said to be semimonotone, if for every $0\neq z \geq 0,\; \exists$ an index $k\in I_n$ such that $z_k >0$ and $(M z)_k \geq 0.$
\end{defn}
\begin{defn}\cite{pang1979onq, cottle1968problem}
A matrix $M\in \mathbb{R}^{n\times n}$ is said to be strictly semimonotone, if for every $0\neq z \geq 0,\; \exists$ an index $k\in I_n$ such that $z_k >0$ and $(M z)_k > 0.$
\end{defn}
\NI Let $r$th order $n$ dimensional real tensor $\mathcal{M}= (m_{i_1 ... i_r}) $ be a multidimensional array of entries $m_{i_1 ... i_r} \in \mathbb{R}$ where $i_j \in I_n$ with $j\in I_r$. $\mathbb{T}_{r,n}$ denotes the set of real tensors of order $r$ and dimension $n.$ Any $\mathcal{M}= (m_{i_1 ... i_r}) \in \mathbb{T}_{r,n} $ is called a symmetric tensor, if the entries $m_{i_1 ... i_r}$ are invariant under any permutation of their indices. $\mathbb{S}_{r,n}$ denotes the collection of all symmetric tensors of order $r$ and dimension $n$  where $r$ and $n$ are two given positive integers with $r,n\geq 2$. An identity tensor of order $r,$ $\mathcal{I}=(\delta_{i_1 ... i_r})\in \mathbb{T}_{r,n}$ is defined as follows:
$\delta_{i_1 ... i_r}= \left\{
\begin{array}{ll}
	  1  &:\; i_1= ...= i_r \\
	  0  &:\; else
	   \end{array}
 \right. .$
Let $\mathcal{O}$ denote the zero tensor where each entry of $\mathcal{O}$ is zero. For any $u \in \mathbb{R}^n$ and $p\in \mathbb{R}$, let $u^{[p]}$ denote the vector $(u_1^{p},\; u_2^{p},\;...,\;u_n^{p})^T.$ For $\mathcal{M}\in \mathbb{T}_{r,n} $ and $u\in \mathbb{R},\; \mathcal{M}u^{r-1}\in \mathbb{R}^n $ is a vector defined by
\[ (\mathcal{M}u^{r-1})_i = \sum_{i_2, ..., i_r =1}^{n} m_{i i_2 ...i _r} u_{i_2}  \cdot\cdot \cdot u_{i_r} , \mbox{   for all } i \in I_n \]
and $\mathcal{M}u^r\in \mathbb{R} $ is a scalar defined by
\[ u^T\mathcal{M}u^{r-1}= \mathcal{M}u^m = \sum_{i_1, ..., i_r =1}^{n} m_{i_1 ...i_r} u_{i_1} \cdot \cdot \cdot u_{i_r}. \]

\NI For a tensor $\mathcal{M}\in \mathbb{T}_{r,n}$ a vector $u\in \mathbb{R}^n$ is said to be a null vector of $\mathcal{M},$ if $\mathcal{M} u^{r-1} =0.$

\noindent The general product of tensors was introduced by Shao \cite{shao2013general}. Let $\mathcal{A}$ and $\mathcal{B}$ be two $n$ dimensional tensor of order $p \geq 2$ and $r \geq 1,$ respectively. The product $\mathcal{A} \mathcal{B}$ is an $n$ dimensional tensor $\mathcal{C}$ of order $((p-1)(r-1)) + 1$ with entries 
\[c_{j \beta_1 \cdots \beta_{p-1} } =\sum_{j_2, \cdots ,j_p \in I_n} a_{j j_2 \cdots j_p} b_{j_2 \beta_1} \cdots b_{j_p \beta_{p-1}},\] where $j \in I_n$, $\beta_1, \cdots, \beta_{p-1} \in I_n^{r-1}$. %This product was proved to be associative by Shao (\cite{shao2013general}).\

\begin{defn}\cite{qi2005eigenvalues, song2015properties}
Given $\mathcal{M}= (m_{i_1 ...i_r}) \in \mathbb{T}_{r,n}$ and an index set $J\subseteq I_n$ with $|J|=l,\; 1\leq l \leq n,$ a principal subtensor of $\mathcal{M}$ is denoted by $\mathcal{M}^J_l$ and is defined as
$ \mathcal{M}^J_l = ( m_{i_1 ...i_r} ), \; \forall \; i_1, i_2,...i_r \in J .$
\end{defn}

\begin{defn}\cite{song2016properties}
Given $\mathcal{M} \in \mathbb{T}_{r,n} $ and $q\in \mathbb{R}^n$, a vector $u$ is said to be (strictly) feasible solution of TCP$(q,\mathcal{M}),$ if $u \geq 0\; (>0)$ and $\mathcal{M}u^{r-1}+q \geq 0\; (>0)$.
\end{defn}

\begin{defn}\cite{song2016properties}
Given $\mathcal{M} \in \mathbb{T}_{r,n} $ and $q\in \mathbb{R}^n$, TCP$(q,\mathcal{M})$ is said to be (strictly) feasible if a (strictly) feasible vector exists.
\end{defn}

\begin{defn}\cite{song2016properties}
Given $\mathcal{M} \in \mathbb{T}_{r,n} $ and $q\in \mathbb{R}^n$, TCP$(q,\mathcal{M})$ is said to be solvable if there exists a feasible vector $u$ satisfying $u^{T}(\mathcal{M}u^{r-1}+q)=0$ and $u$ is said to be a solution of the TCP$(q,\mathcal{M})$.
\end{defn}

%\begin{defn}\cite{qi2013symmetric}
%A tensor $\mathcal{M}\in \mathbb{T}_{r,n} $ is said to be a strictly copositive tensor if $\mathcal{M}x^m >0 $ for all $u\in \mathbb{R}^n_+ \backslash \{0\}.$ 
%\end{defn}

%\begin{defn}\cite{qi2013symmetric}
%A tensor $\mathcal{M}\in \mathbb{T}_{r,n} $ is said to be a copositive tensor if $\mathcal{M}x^m \geq0$ for all $u\in \mathbb{R}^n_+ .$ 
%\end{defn}

\begin{defn}
\cite{song2015properties} A tensor $\mathcal{M}\in \mathbb{T}_{r,n} $ is said to be a $P_0(P)$-tensor, if for each $u\in \mathbb{R}^n \backslash \{0\}$, there exists an index $i\in I_n$ such that $u_i \neq 0$ and $u_i (\mathcal{M}u^{r-1})_i \geq 0\; (>0)$.
\end{defn}

\begin{defn}\cite{song2014properties, song2016properties}
A tensor $\mathcal{M}\in \mathbb{T}_{r,n} $ is said to be a $R_0$-tensor if the TCP$(0, \mathcal{M})$ has unique $0$ solution. 
\end{defn}

\begin{defn}\cite{song2014properties, song2016properties}
A tensor $\mathcal{M}\in \mathbb{T}_{r,n} $ is said to be a $R$-tensor if it is a $R_0$-tensor and the TCP$(e, \mathcal{M})$ has unique solution $0,$ for $e=(1,1,...,1)^T.$
\end{defn}

\begin{defn}\cite{huang2015q} 
A tensor $\mathcal{M}\in \mathbb{T}_{r,n} $ is said to be a $Q$-tensor if the TCP$(q,\mathcal{M})$ is solvable for all $q\in \mathbb{R}^n $.
\begin{center}
    i.e., $u\geq 0, ~~~ \omega= \mathcal{M}u^{r-1} + q\geq 0, ~~~\mbox{and}~~ u^{T}\omega=0.$
\end{center}
\end{defn}

\begin{defn}\cite{shao2016some}
The $i$th row subtensor of $\mathcal{M}\in \mathbb{T}_{r,n}$ is denoted by $R_i(\mathcal{M})$ and its entries are given as $(R_i(\mathcal{M}))_{i_2 ... i_r}=(m_{i i_2... i_r})$, where $i_j\in I_n$ and $2\leq j\leq r.$ $\mathcal{M}$ is said to be a row diagonal tensor if all its row subtensors $R_1(\mathcal{M}),..., R_n(\mathcal{M})$ are diagonal tensors.
\end{defn}

\begin{defn}\cite{shao2016some}
Given $\mathcal{M}=(m_{i_1 ... i_r})\in \mathbb{T}_{r,n},$ the majorization matrix of $\mathcal{M}$ is a matrix of order $n,$ denoted by $\Tilde{\mathcal{M}}$ and is defined as $\Tilde{\mathcal{M}}_{i j}= (m_{ij ... j}),$ $\forall\; i,j \in I_n.$
\end{defn}

\begin{lemma}\cite{shao2016some}
Let $\mathcal{M}\in \mathbb{T}_{r,n}$ and $\Tilde{\mathcal{M}}$ be the majorization matrix of $\mathcal{M}.$ Then $\mathcal{M}$ is row diagonal tensor if and only if $\mathcal{M}=\Tilde{\mathcal{M}}\mathcal{I},$ where $\mathcal{I}$ is the identity tensor of order $r$ and dimension $n.$
\end{lemma}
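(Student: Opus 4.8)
The plan is to compute the entries of the product tensor $\Tilde{\mathcal{M}}\mathcal{I}$ directly from Shao's definition of the general tensor product and then to compare them, slot by slot, with the entries of $\mathcal{M}$.

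First I would specialize the general product formula to the present situation. The left factor $\Tilde{\mathcal{M}}$ is a matrix, i.e.\ a tensor of order $p=2$, and the right factor $\mathcal{I}$ is the identity tensor of order $r$; hence $\Tilde{\mathcal{M}}\mathcal{I}$ is a tensor of order $(p-1)(r-1)+1=r$, the same order as $\mathcal{M}$. Writing a generic index of $\Tilde{\mathcal{M}}\mathcal{I}$ as $(j,i_2,\ldots,i_r)$ and noting that the single ($p-1=1$) trailing block is the $(r-1)$-tuple $\beta_1=(i_2,\ldots,i_r)$, the product formula gives
\[ (\Tilde{\mathcal{M}}\mathcal{I})_{j i_2 \cdots i_r}=\sum_{j_2\in I_n}\Tilde{\mathcal{M}}_{j j_2}\,\delta_{j_2 i_2 \cdots i_r}. \]
Since $\delta_{j_2 i_2 \cdots i_r}=1$ exactly when $j_2=i_2=\cdots=i_r$ and is $0$ otherwise, this sum vanishes unless $i_2=i_3=\cdots=i_r$, and in that case it equals $\Tilde{\mathcal{M}}_{j i_2}=m_{j i_2 i_2 \cdots i_2}$ by the definition of the majorization matrix. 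Thus, for every $\mathcal{M}$, the tensor $\Tilde{\mathcal{M}}\mathcal{I}$ is row diagonal with
\[ (\Tilde{\mathcal{M}}\mathcal{I})_{j i_2 \cdots i_r}= \begin{cases} m_{j i_2 \cdots i_2}, & i_2=i_3=\cdots=i_r,\\ 0, & \text{otherwise.}\end{cases} \]

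With this entry formula in hand, both implications are immediate. If $\mathcal{M}$ is row diagonal, then $m_{j i_2 \cdots i_r}=0$ whenever $i_2,\ldots,i_r$ are not all equal, while for $i_2=\cdots=i_r$ the value $m_{j i_2 \cdots i_2}$ coincides with $(\Tilde{\mathcal{M}}\mathcal{I})_{j i_2 \cdots i_2}$ by the computation above; hence $\mathcal{M}=\Tilde{\mathcal{M}}\mathcal{I}$ entrywise. Conversely, if $\mathcal{M}=\Tilde{\mathcal{M}}\mathcal{I}$, then every entry of $R_j(\mathcal{M})$ whose indices are not all equal vanishes, so each row subtensor $R_j(\mathcal{M})$ is a diagonal tensor and $\mathcal{M}$ is row diagonal. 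The only delicate point is correctly parsing the composite multi-index in Shao's product, namely recognizing that the $r-1$ trailing slots of $\mathcal{I}$ are the ones contracted against the single column index of $\Tilde{\mathcal{M}}$, so that the identity tensor forces those $r-1$ slots to agree; once that is handled, the rest is routine bookkeeping.
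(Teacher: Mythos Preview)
Your argument is correct: you compute $(\Tilde{\mathcal{M}}\mathcal{I})_{j i_2\cdots i_r}$ directly from Shao's product formula, observe that the identity tensor kills all terms except the one with $j_2=i_2=\cdots=i_r$, and thereby identify $\Tilde{\mathcal{M}}\mathcal{I}$ with the row-diagonal tensor whose diagonal row entries are exactly those of $\mathcal{M}$; both implications then follow by entrywise comparison. Note, however, that the paper does not supply its own proof of this lemma --- it is quoted from \cite{shao2016some} in the preliminaries --- so there is nothing in the present paper to compare against; your direct computation is precisely the standard route and matches the argument in the cited source.
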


%\begin{theorem}\cite{bai2016global}
%For any $q\in \mathbb{R}^n$ and a $P$-tensor $\mathcal{M} \in \mathbb{T}_{r,n}$, the solution set of TCP$(q,\mathcal{M})$ is nonempty and compact.
%\end{theorem}
%\begin{lemma}\cite{song2014properties}
%Each strictly semipositive tensor is a $R$-tensor, and each $R$-tensor is a $Q$-tensor.
%\end{lemma}

%\begin{lemma}\cite{song2014properties}
%A semipositive $R_0$-tensor is a $Q$-tensor.
%\end{lemma}

%\begin{theorem}
%Let A be a semi-positive tensor. Then for each q ∈ Rn, the solution set of the TCP(A, q) is bounded.
%\end{theorem}

\begin{theorem}\cite{song2016tensor}
Let $\mathcal{M}=(m_{i_1, ..., i_r})\in \mathbb{T}_{r,n}.$ Then each principal subtensor of a (strictly) semipositive tensor is (strictly) semipositive tensor.
\end{theorem}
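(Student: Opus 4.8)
The plan is to reduce the (strict) semipositivity of a principal subtensor to that of the ambient tensor via a zero-extension argument. Recall that $\mathcal{M}\in\mathbb{T}_{r,n}$ is (strictly) semipositive precisely when, for every $0\neq u\geq 0$, there is an index $k\in I_n$ with $u_k>0$ and $(\mathcal{M}u^{r-1})_k\geq 0$ (respectively $>0$); this is the tensor analogue of Definition~2.1--2.2. Fix an index set $J\subseteq I_n$ with $|J|=l$ and let $\mathcal{M}^J_l$ denote the corresponding principal subtensor, regarded as an element of $\mathbb{T}_{r,l}$ with coordinates indexed by $J$.

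First I would take an arbitrary $0\neq v\geq 0$ in $\mathbb{R}^l$ and lift it to $\bar{v}\in\mathbb{R}^n$ by setting $\bar{v}_i=v_i$ for $i\in J$ and $\bar{v}_i=0$ for $i\notin J$. Then $\bar{v}\geq 0$ and $\bar{v}\neq 0$, so (strict) semipositivity of $\mathcal{M}$ yields an index $k\in I_n$ with $\bar{v}_k>0$ and $(\mathcal{M}\bar{v}^{r-1})_k\geq 0$ (respectively $>0$). Since $\bar{v}$ vanishes outside $J$, the inequality $\bar{v}_k>0$ forces $k\in J$; this is exactly the point where using the zero extension, rather than an arbitrary nonnegative extension, is essential.

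Next I would verify that $(\mathcal{M}\bar{v}^{r-1})_k$ equals $(\mathcal{M}^J_l v^{r-1})_k$. Indeed, from the definition of $\mathcal{M}u^{r-1}$ we have $(\mathcal{M}\bar{v}^{r-1})_k=\sum_{i_2,\dots,i_r=1}^{n} m_{k i_2\cdots i_r}\,\bar{v}_{i_2}\cdots \bar{v}_{i_r}$, and every summand in which some $i_j\notin J$ carries the factor $\bar{v}_{i_j}=0$; hence the sum collapses to $\sum_{i_2,\dots,i_r\in J} m_{k i_2\cdots i_r}\, v_{i_2}\cdots v_{i_r}=(\mathcal{M}^J_l v^{r-1})_k$. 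Combining this with the previous step, we obtain $k\in J$, $v_k>0$, and $(\mathcal{M}^J_l v^{r-1})_k\geq 0$ (respectively $>0$), which is precisely the defining condition of a (strictly) semipositive tensor applied to $\mathcal{M}^J_l$. As $v$ was arbitrary, $\mathcal{M}^J_l$ is (strictly) semipositive.

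I do not expect a genuine obstacle here; the proof is essentially bookkeeping. The two points requiring care are making sure the index delivered by semipositivity of $\mathcal{M}$ really lands in the smaller set $J$, and checking that the restricted tensor--vector product agrees with the ambient one on vectors supported in $J$. The strict and non-strict cases proceed in parallel, differing only in whether the operative inequality reads $\geq 0$ or $>0$.
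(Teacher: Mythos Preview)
Your argument is correct and is the standard zero-extension proof of this fact. Note, however, that the paper does not supply its own proof of this statement: it is listed in the Preliminaries with a citation to \cite{song2016tensor}, so there is no in-paper proof to compare against.
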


\begin{theorem}\cite{song2016tensor}
Let $\mathcal{M}=(m_{i_1, ..., i_r})\in \mathbb{T}_{r,n}.$ The following statements are equivalent:\\
(a) $\mathcal{M}$ is semipositive tensor.\\
(b) The TCP$(q,\mathcal{M})$ has a unique solution for every $q > 0.$\\
(c) For every index set $J \subseteq I_n$ with $|J|=r,$ the system $\mathcal{M}^J_r u_J^{r-1} <0,\;\; u_J \geq 0$ has no solution where, $u_J \in \mathbb{R}^{r}.$
\end{theorem}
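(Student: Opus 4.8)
Recall that $\mathcal{M}\in\mathbb{T}_{r,n}$ is semipositive when for every $0\neq u\geq 0$ there is an index $k\in I_n$ with $u_k>0$ and $(\mathcal{M}u^{r-1})_k\geq 0$. The plan is to prove the two equivalences (a)$\,\Leftrightarrow\,$(b) and (a)$\,\Leftrightarrow\,$(c) around statement (a), using the result already recorded that every principal subtensor of a (strictly) semipositive tensor is again (strictly) semipositive. Two elementary facts are used repeatedly: when $q\geq 0$ the vector $u=0$ solves TCP$(q,\mathcal{M})$, so the content of (b) is the absence of a nonzero solution; and if $u$ solves TCP$(q,\mathcal{M})$ with $q>0$, then $u\geq 0$, $\mathcal{M}u^{r-1}+q\geq 0$ and $u^{T}(\mathcal{M}u^{r-1}+q)=0$ force $u_i(\mathcal{M}u^{r-1}+q)_i=0$ for each $i$, hence $(\mathcal{M}u^{r-1})_i=-q_i<0$ for every $i\in\mathrm{supp}(u)$.

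For (a)$\,\Rightarrow\,$(b): existence is the first elementary fact; for uniqueness, if $q>0$ and some $u\neq 0$ solved TCP$(q,\mathcal{M})$, then $(\mathcal{M}u^{r-1})_k<0$ for every $k\in\mathrm{supp}(u)$ by the second fact, contradicting semipositivity applied to the nonzero vector $u\geq 0$. For (b)$\,\Rightarrow\,$(a) I would argue contrapositively: if $\mathcal{M}$ is not semipositive, choose $0\neq u\geq 0$ with $(\mathcal{M}u^{r-1})_k<0$ for all $k\in\mathrm{supp}(u)$, and set $q_i=-(\mathcal{M}u^{r-1})_i>0$ for $i\in\mathrm{supp}(u)$ and $q_i=|(\mathcal{M}u^{r-1})_i|+1>0$ otherwise. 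Then $q>0$, $\mathcal{M}u^{r-1}+q\geq 0$, and $u^{T}(\mathcal{M}u^{r-1}+q)=0$ (the term is $0$ on $\mathrm{supp}(u)$ because $(\mathcal{M}u^{r-1}+q)_i=0$ there, and $0$ off $\mathrm{supp}(u)$ because $u_i=0$), so $u$ is a nonzero solution and TCP$(q,\mathcal{M})$ is not uniquely solvable.

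For (a)$\,\Rightarrow\,$(c): suppose some index set $J$ admitted a solution $u_J\geq 0$ of $\mathcal{M}^J_{|J|}u_J^{r-1}<0$. Every component of $\mathcal{M}^J_{|J|}u_J^{r-1}$ is negative, so this vector is nonzero and therefore $u_J\neq 0$; but the principal subtensor $\mathcal{M}^J_{|J|}$ is semipositive, so it has an index $k$ with $(u_J)_k>0$ and $(\mathcal{M}^J_{|J|}u_J^{r-1})_k\geq 0$, a contradiction. For (c)$\,\Rightarrow\,$(a) I would again use the contrapositive: take $0\neq u\geq 0$ witnessing the failure of semipositivity, put $J=\mathrm{supp}(u)$, and let $u_J$ be the restriction of $u$ to the coordinates in $J$, so $u_J>0$. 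The one identity to verify is $(\mathcal{M}^J_{|J|}u_J^{r-1})_k=(\mathcal{M}u^{r-1})_k$ for all $k\in J$, which holds because $u$ vanishes outside $J$, so in the sum $\sum_{i_2,\dots,i_r} m_{k i_2\cdots i_r}u_{i_2}\cdots u_{i_r}$ only multi-indices lying entirely in $J$ survive. Consequently $\mathcal{M}^J_{|J|}u_J^{r-1}<0$ with $u_J\geq 0$, so the system in (c) is solvable.

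The complementarity bookkeeping is routine; the two spots needing care are the construction of $q$ in (b)$\,\Rightarrow\,$(a), where $q$ must be kept strictly positive on all of $I_n$ without spoiling feasibility or complementarity of the chosen $u$, and the restriction identity in (c)$\,\Rightarrow\,$(a). The latter is also where the cardinality of the index set matters: the $J$ produced by the argument is the support of the witnessing vector, which is exactly what makes the restricted tensor equation reproduce the relevant components of $\mathcal{M}u^{r-1}$; I would expect that insisting on an index set of some prescribed size other than $|\mathrm{supp}(u)|$ is the genuine obstacle, so in (c) the index set should be read as ranging over all nonempty subsets $J\subseteq I_n$ with $u_J\in\mathbb{R}^{|J|}$.
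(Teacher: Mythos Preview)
Your argument is correct in all four implications, and your closing remark about the cardinality condition in (c) is well taken: the support of the witnessing vector need not have size equal to the order $r$, so the clause ``$|J|=r$'' in the statement should indeed be read as ``$|J|$ arbitrary'' (equivalently, $J$ ranges over all nonempty subsets of $I_n$ and $u_J\in\mathbb{R}^{|J|}$), otherwise (c)$\,\Rightarrow\,$(a) fails exactly as you describe.

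As for comparison with the paper: this theorem appears in the Preliminaries section and is quoted without proof from Song and Qi \cite{song2016tensor}; the present paper does not supply its own argument. There is therefore nothing in the paper to compare your proof against. Your proof is the standard one and matches what one finds in the cited source: the equivalence (a)$\,\Leftrightarrow\,$(b) via the complementarity identity $u_i(\mathcal{M}u^{r-1}+q)_i=0$ and the explicit construction of a strictly positive $q$ from a witness to non-semipositivity, and the equivalence (a)$\,\Leftrightarrow\,$(c) via inheritance of semipositivity by principal subtensors together with the restriction identity $(\mathcal{M}^J_{|J|}u_J^{r-1})_k=(\mathcal{M}u^{r-1})_k$ for $k\in J=\mathrm{supp}(u)$.
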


\begin{theorem}\cite{song2016tensor}
Let $\mathcal{M}=(m_{i_1, ... i_r})\in \mathbb{T}_{r,n.}$ The following statements are equivalent:\\
(a) $\mathcal{M}$ is strictly semipositive tensor.\\
(b) The TCP$(q,\mathcal{M})$ has a unique solution for every $q\geq 0.$\\
(c) For every index set $J \subseteq I_n$ with $|J|=r,$ the system $ \mathcal{M}^J_r u_J^{r-1} \leq 0,\;\; u_J \geq 0,\;\; u_J \neq 0$ has no solution where, $u_J \in \mathbb{R}^{r}.$
\end{theorem}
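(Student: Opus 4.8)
The plan is to mirror the classical characterization of strictly semimonotone matrices (the class $\mathbf{E}$), with the linear map $z\mapsto Mz$ replaced by the degree-$(r-1)$ homogeneous map $u\mapsto\mathcal{M}u^{r-1}$; recall that, exactly as in the matrix definition recalled above, $\mathcal{M}$ is strictly semipositive iff for every $u\geq 0$, $u\neq 0$, there is an index $k$ with $u_k>0$ and $(\mathcal{M}u^{r-1})_k>0$. The one structural fact I would use repeatedly is a support identity: if $u\geq 0$ has support $K=\{i:u_i>0\}$, then $(\mathcal{M}u^{r-1})_i=(\mathcal{M}^K_{|K|}u_K^{r-1})_i$ for every $i\in K$, because any monomial $m_{i i_2\cdots i_r}u_{i_2}\cdots u_{i_r}$ containing a factor $u_{i_j}$ with $i_j\notin K$ vanishes. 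This identity is what converts the global condition (a) into the family of principal-subtensor conditions (c); I read the sets $J$ in (c) as ranging over all nonempty subsets of $I_n$, and before invoking it I would reduce any witness for the failure of (a) to one supported precisely on its nonzero coordinates.

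For (a)$\Leftrightarrow$(c) I would argue by contraposition in both directions. If $\mathcal{M}$ is not strictly semipositive, fix $0\neq x\geq 0$ with $(\mathcal{M}x^{r-1})_i\leq 0$ for all $i$ in $J:=\operatorname{supp}(x)$; by the support identity $u_J:=x_J$ solves $\mathcal{M}^J_{|J|}u_J^{r-1}\leq 0$, $u_J\geq 0$, $u_J\neq 0$, so (c) fails. Conversely, from such a $u_J$, extend by zeros to $x\in\mathbb{R}^n_+\setminus\{0\}$; the identity gives $(\mathcal{M}x^{r-1})_i\leq 0$ for every $i$ with $x_i>0$, so (a) fails.

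For (a)$\Rightarrow$(b): fix $q\geq 0$. Since $r\geq 2$ we have $\mathcal{M}0^{r-1}=0$, so $u=0$ always solves $\mathrm{TCP}(q,\mathcal{M})$, and uniqueness amounts to excluding a nonzero solution. If $u\neq 0$ were a solution, with slack $\omega:=\mathcal{M}u^{r-1}+q$, then $u,\omega\geq 0$ together with $u^{T}\omega=0$ force $\omega_i=0$, i.e. $(\mathcal{M}u^{r-1})_i=-q_i\leq 0$, for every $i$ with $u_i>0$ — contradicting strict semipositivity. For (b)$\Rightarrow$(a), by contraposition: if $\mathcal{M}$ is not strictly semipositive, take $0\neq x\geq 0$ with $(\mathcal{M}x^{r-1})_i\leq 0$ on $\operatorname{supp}(x)$ and set $q_i:=-(\mathcal{M}x^{r-1})_i$ when $x_i>0$ and $q_i:=\max\{0,-(\mathcal{M}x^{r-1})_i\}$ when $x_i=0$. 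Then $q\geq 0$ and $\omega:=\mathcal{M}x^{r-1}+q\geq 0$, while every product $x_i\omega_i$ vanishes, so $x^{T}\omega=0$ and the nonzero $x$ solves $\mathrm{TCP}(q,\mathcal{M})$, contradicting uniqueness in (b). These three implications close the equivalence (equivalently one may run the cycle (a)$\Rightarrow$(b)$\Rightarrow$(c)$\Rightarrow$(a)).

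I expect the only real friction to be: (i) pinning down the support identity and the reduction to a full-support index set, so that (c) as stated is genuinely equivalent to (a) and (b); and (ii) checking in (b)$\Rightarrow$(a) that the single vector $q$ built there is simultaneously nonnegative, makes $x$ feasible, and keeps complementarity intact. Everything else is routine bookkeeping with the homogeneous map. The non-strict companion theorem follows by the same scheme, with $\leq$ in (c) replaced by the strict $<$, the hypothesis $q\geq 0$ replaced by $q>0$, and the sharper relation $(\mathcal{M}x^{r-1})_i=-q_i<0$ used to contradict plain semipositivity.
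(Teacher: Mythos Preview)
Your proof is correct, but there is nothing in the paper to compare it against: this theorem appears in the Preliminaries section as a quoted result from \cite{song2016tensor} and carries no proof here. Your reading of condition (c) as ranging over all nonempty $J\subseteq I_n$ (rather than only those with $|J|=r$) is the right one; the statement as printed overloads the symbol $r$, and the equivalence with (a) and (b) would fail if taken literally when $r>n$ or when the support of a witness has fewer than $r$ indices. The support identity you isolate, together with the contrapositive construction of $q$ in (b)$\Rightarrow$(a), is exactly the standard route in the cited source, so your argument matches the original Song--Qi proof in structure.
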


%\begin{theorem}\cite{song2016tensor}
%Let $\mathcal{M}\in \mathbb{S}_{r,n}.$ Then $\mathcal{M}$ is (strictly) semipositive tensor if and only if it is (strictly) copositive tensor.
%\end{theorem}

\section{Main results}

At the outset, we define semipositive tensor along with an example. Subsequently, we establish the invariance property of semipositive tensor with respect to diagonal matrix as well as permutation matrix.

\begin{defn}\cite{song2014properties}
A tensor $\mathcal{M}\in \mathbb{T}_{r,n} $ is said to be (strictly) semipositive tensor if for each $u\in \mathbb{R}^n_+ \backslash \{0\}$, there exists an index $l\in I_n$ such that $u_l>0$ and $(\mathcal{M}u^{r-1})_l\; \geq 0\; (>0)$.
\end{defn}

Here we give an example of a semipositive tensor.
\begin{examp}
Let $\mathcal{M}\in \mathbb{T}_{4,3}$ such that $m_{1211}= 1,\; m_{1233}= 2,\; m_{1323}= -1,\; m_{1223}= -3,\; m_{1232}= 4,\; m_{2211}=1,\; m_{2223}= -3,\; m_{2322}= 5,\; m_{3232}= -1,\; m_{3322}=-3,\; m_{3313}= -2$ and all other entries of $\mathcal{M}$ are zero. For $u =\left(\begin{array}{c} u_1 \\ u_2\\ u_3 \end{array} \right) \in \mathbb{R}^3$ we have $\mathcal{M}u^3 = \left(\begin{array}{c} u_1^2 u_2 + u_2 u_3^2 + u_2^2 u_3\\ u_1^2 u_2 + 2u_2^2 u_3 \\ -2u_1 u_3^2 - 4u_2^2 u_3 \end{array} \right).$ 
% Then $u_1(\mathcal{M}u^3)_1 =-3 <0$ and $u_2(\mathcal{M}u^3)_2 =-1<0.$
Then $\mathcal{M}$ is a semipositive tensor.
\end{examp}

\begin{theorem}
Let $\mathcal{M} \in \mathbb{T}_{r,n}$ and $D\in \mathbb{R}^{n\times n}$ be a positive diagonal matrix. Then $\mathcal{M}$ is (strictly) semipositive tensor if and only if $D \mathcal{M}$ is (strictly) semipositive tensor.
\end{theorem}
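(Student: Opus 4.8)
The plan is to reduce the whole statement to a single componentwise identity relating $(D\mathcal{M})u^{r-1}$ and $\mathcal{M}u^{r-1}$. First I would unwind the general product of tensors (as recalled in the Preliminaries) with $\mathcal{A}=D$ of order $p=2$ and $\mathcal{B}=\mathcal{M}$ of order $r$: the product $D\mathcal{M}$ is again an $n$-dimensional tensor of order $(p-1)(r-1)+1=r$, with entries $(D\mathcal{M})_{i\,i_2\cdots i_r}=\sum_{j\in I_n} d_{ij}\,m_{j\,i_2\cdots i_r}$. Since $D=diag(d_1,\dots,d_n)$, only the term $j=i$ survives, so $(D\mathcal{M})_{i\,i_2\cdots i_r}=d_i\,m_{i\,i_2\cdots i_r}$. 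Consequently, for any $u\in\mathbb{R}^n$ and any $i\in I_n$,
\[ ((D\mathcal{M})u^{r-1})_i=\sum_{i_2,\dots,i_r=1}^{n} d_i\,m_{i\,i_2\cdots i_r}\,u_{i_2}\cdots u_{i_r}=d_i\,(\mathcal{M}u^{r-1})_i, \]
i.e. $(D\mathcal{M})u^{r-1}=D(\mathcal{M}u^{r-1})$ componentwise.

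Next I would exploit the positivity of $D$: since $d_i>0$ for every $i$, the real numbers $((D\mathcal{M})u^{r-1})_i$ and $(\mathcal{M}u^{r-1})_i$ always share the same sign, and in particular one is $\geq 0$ (resp. $>0$) if and only if the other is. For the forward implication, assume $\mathcal{M}$ is (strictly) semipositive and fix $u\in\mathbb{R}^n_+\setminus\{0\}$. By Definition~3.1 there is an index $l\in I_n$ with $u_l>0$ and $(\mathcal{M}u^{r-1})_l\geq 0$ (resp. $>0$); the displayed identity then gives $((D\mathcal{M})u^{r-1})_l=d_l(\mathcal{M}u^{r-1})_l\geq 0$ (resp. $>0$), so the same index $l$ witnesses (strict) semipositivity of $D\mathcal{M}$.

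For the converse I would note that $D^{-1}=diag(d_1^{-1},\dots,d_n^{-1})$ is again a positive diagonal matrix and that $D^{-1}(D\mathcal{M})=\mathcal{M}$ (immediate from the entrywise formula above, or from associativity of the general product with matrix factors on the left), so applying the already-proved forward implication to $D^{-1}$ and the tensor $D\mathcal{M}$ shows that $\mathcal{M}$ is (strictly) semipositive whenever $D\mathcal{M}$ is. I do not expect a genuine obstacle: the only step needing a little care is the bookkeeping in the general tensor product to confirm $(D\mathcal{M})_{i\,i_2\cdots i_r}=d_i\,m_{i\,i_2\cdots i_r}$, hence $(D\mathcal{M})u^{r-1}=D(\mathcal{M}u^{r-1})$; once that identity is in hand, positivity of the diagonal entries makes both directions immediate, and the argument is verbatim the same in the semipositive and strictly semipositive cases, with $\geq$ replaced by $>$.
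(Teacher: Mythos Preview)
Your proof is correct and follows essentially the same route as the paper: establish $((D\mathcal{M})u^{r-1})_l=d_l(\mathcal{M}u^{r-1})_l$, use positivity of $d_l$ for the forward direction, and apply the forward direction to $D^{-1}$ for the converse. The only difference is that you explicitly verify the componentwise identity via the general tensor product formula, whereas the paper simply asserts it.
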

\begin{proof}
Let $D=diag(d_1,...,d_n)$ be a positive diagonal matrix. Then the diagonal entries $d_i >0,\;\forall \; i\in I_n.$ Suppose $\mathcal{M} \in \mathbb{T}_{r,n}$ is a (strictly) semipositive tensor. Then for $0 \neq u\geq 0\;\exists $ an index $l$ such that $u_l>0$ and $(\mathcal{M} u^{r-1})_l \geq 0\; (>0).$ We show that $D \mathcal{M}$ is (strictly) semipositive tensor. Since $d_l> 0$ we obtain $(D \mathcal{M}u^{r-1})_l =d_l(\mathcal{M}u^{r-1})_l \geq 0\; (>0).$ Thus $D \mathcal{M}$ is (strictly) semipositive tensor.

Conversely, suppose $D \mathcal{M}$ is (strictly) semipositive tensor. Now $D$ is positive diagonal matrix so $D^{-1}$ exists and $D^{-1}$ is again a positive diagonal matrix. Therefore $D^{-1} (D\mathcal{M}) =\mathcal{M}$ is a (strictly) semipostive tensor.
\end{proof}

\begin{corol}
Let $\mathcal{M} \in \mathbb{T}_{r,n}$ be a semipositive tensor and $D\in \mathbb{R}^{n\times n}$ be a nonnegative diagonal matrix. Then $D \mathcal{M}$ is  semipositive tensor.
\end{corol}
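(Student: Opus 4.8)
The plan is to repeat, essentially verbatim, the forward implication in the proof of the previous theorem, while observing that for the \emph{non-strict} conclusion one needs only $D \geq 0$ rather than $D > 0$. Write $D = \mathrm{diag}(d_1, \dots, d_n)$ with every $d_i \geq 0$. As there, the one elementary fact in play is that left multiplication by the diagonal matrix $D$ rescales each coordinate of $\mathcal{M}u^{r-1}$ by the corresponding $d_i$, that is, $((D\mathcal{M})u^{r-1})_i = d_i (\mathcal{M}u^{r-1})_i$ for all $i \in I_n$ and all $u \in \mathbb{R}^n$ (this is the same identity implicitly used in the previous theorem, coming from the general tensor product of an order-$2$ and an order-$r$ tensor).

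First I would fix an arbitrary $u \in \mathbb{R}^n_+ \setminus \{0\}$. Since $\mathcal{M}$ is semipositive, there is an index $l \in I_n$ with $u_l > 0$ and $(\mathcal{M}u^{r-1})_l \geq 0$. Then $((D\mathcal{M})u^{r-1})_l = d_l (\mathcal{M}u^{r-1})_l \geq 0$, being a product of two nonnegative reals. Hence the same $l$ is a valid witness for $D\mathcal{M}$, and since $u$ was arbitrary, $D\mathcal{M}$ is a semipositive tensor.

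There is no genuine obstacle here; the only points worth recording are the two places where this argument diverges from the previous theorem. The strict version fails because a strict witness $(\mathcal{M}u^{r-1})_l > 0$ yields only $d_l (\mathcal{M}u^{r-1})_l \geq 0$ when $d_l = 0$, so a merely nonnegative $D$ need not preserve \emph{strict} semipositivity — this is precisely why the corollary is confined to the (non-strict) semipositive class. Likewise no converse is asserted: when some $d_i = 0$ the matrix $D$ is singular, so $D\mathcal{M}$ does not recover $\mathcal{M}$, in contrast with the invertible positive-diagonal case exploited in the converse direction of the previous theorem.
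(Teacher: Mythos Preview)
Your proposal is correct and is exactly the argument the paper has in mind: the corollary is stated without a separate proof, immediately after the positive-diagonal theorem, because the forward implication there goes through verbatim with $d_l \geq 0$ in place of $d_l > 0$ for the non-strict inequality. Your remarks on why strictness and the converse are lost also align with the paper's subsequent Remark and Example.
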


\begin{remk}
The converse of the above corollary is not true in general.
\end{remk}

\begin{examp}
Let $\mathcal{M}\in \mathbb{T}_{4,2}$ be such that $m_{1111}= 1,\; m_{1121}= -1,\; m_{1112}= -3,\; m_{1222}= -1,\; m_{1211}=1,\; m_{2121}= -3,\; m_{2222}= 2$ and all other entries of $\mathcal{M}$ are zero. Then for $u =\left(\begin{array}{c} u_1 \\ u_2 \end{array} \right) \in \mathbb{R}^2$ we have $\mathcal{M}u^3 = \left(\begin{array}{c} u_1^3 -3u_1^2 u_2 -u_2^3 \\ -3u_1^2 u_2 + 2u_2^3 \end{array} \right).$ Clearly for $u =\left(\begin{array}{c} 1 \\ 1 \end{array} \right)$ we obtain $\mathcal{M}u^3 = \left(\begin{array}{c} -3 \\ -1 \end{array} \right).$
% Then $u_1(\mathcal{M}u^3)_1 =-3 <0$ and $u_2(\mathcal{M}u^3)_2 =-1<0.$
This implies $\mathcal{M}$ is not a semipositive tensor.

\NI Now we consider two nonnegative diagonal matrices $D_1=\left(\begin{array}{cc}
    0 & 0 \\
    0 & 3
\end{array} \right)$ and $D_2=\left(\begin{array}{cc}
    2 & 0 \\
    0 & 0
\end{array} \right).$ Let $D_1\mathcal{M} =\mathcal{B}$ and $D_2\mathcal{M} =\mathcal{C}.$ Then $\mathcal{B}=(b_{ijkl})\in \mathbb{T}_{4,2}$ with  $b_{2121}=-9,\; b_{2222}=6$ and all other entries of $\mathcal{B}$ are zero. For $u =\left(\begin{array}{c} u_1 \\ u_2 \end{array} \right) \in \mathbb{R}^2$ we have $\mathcal{B}u^3 =\left(\begin{array}{c} 0 \\ -9 u_1^2 u_2 + 6 u_2^3 \end{array} \right).$ Clearly $\mathcal{B}$ is a semipositive tensor.

\NI Also $\mathcal{C}=(c_{ijkl})\in \mathbb{T}_{4,2}$ with $c_{1111}= 2,\; c_{1121}= -2,\; c_{1112}= -6,\; c_{1222}= -2,\; c_{1211}=2$ and all other entries of $\mathcal{C}$ are zero. Then for $u =\left(\begin{array}{c} u_1 \\ u_2 \end{array} \right) \in \mathbb{R}^2$ we have $\mathcal{C}u^3 =\left(\begin{array}{c} 2u_1^3 -6u_1^2 u_2 -2u_2^3 \\ 0 \end{array} \right).$ Clearly $\mathcal{C}$ is a semipositive tensor.
\end{examp}

\begin{theorem}
Let $P\in \mathbb{R}^{n\times n}$ be a permutation matrix also let $\mathcal{M} \in \mathbb{T}_{r,n}.$ Then the tensor $\mathcal{M}$ is (strictly) semipositive tensor if and only if $P \mathcal{M} P^T$ is (strictly) semipositive tensor. 
\end{theorem}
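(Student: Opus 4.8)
The plan is to reduce the statement to a direct change-of-variables argument on the definition of (strictly) semipositive tensor, exploiting that a permutation matrix $P$ permutes coordinates and that $P^{-1}=P^T$ is again a permutation matrix. First I would fix notation: let $\sigma$ be the permutation of $I_n$ associated with $P$, so that $(Pu)_i = u_{\sigma(i)}$ for every $u\in\mathbb{R}^n$. I would then compute the entries of the tensor $\mathcal{N}:=P\mathcal{M}P^T$ via the general tensor product recalled in the preliminaries; the key identity to establish is
\[
(\mathcal{N}u^{r-1})_i \;=\; \bigl(\mathcal{M}(P^T u)^{r-1}\bigr)_{\sigma(i)}\,,\qquad \forall\, i\in I_n,\ \forall\, u\in\mathbb{R}^n.
\]
This is the computational heart of the argument: $P\mathcal{M}P^T$ has entries $n_{i i_2\cdots i_r}=m_{\sigma(i)\,\sigma(i_2)\cdots\sigma(i_r)}$ (after unwinding Shao's product with the two permutation factors), and substituting into the formula for $(\mathcal{N}u^{r-1})_i$ and relabelling the summation indices $i_j\mapsto\sigma(i_j)$ gives exactly the right-hand side.

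Granting that identity, the equivalence follows cleanly. Suppose $\mathcal{M}$ is (strictly) semipositive and take any $0\neq u\geq 0$. Since $P^T u$ is obtained from $u$ by permuting coordinates, $P^T u\geq 0$ and $P^T u\neq 0$; hence there is an index $k$ with $(P^T u)_k>0$ and $(\mathcal{M}(P^T u)^{r-1})_k\geq 0\ (>0)$. Choosing $l:=\sigma^{-1}(k)$, we get $u_{\sigma(l)}=(P^Tu)_{?}$—more precisely $(P^Tu)_k = u_{\sigma^{-1}(k)} = u_l$, so $u_l>0$, and by the identity $(\mathcal{N}u^{r-1})_l = (\mathcal{M}(P^Tu)^{r-1})_{\sigma(l)} = (\mathcal{M}(P^Tu)^{r-1})_k \geq 0\ (>0)$. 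Thus $\mathcal{N}=P\mathcal{M}P^T$ is (strictly) semipositive. The converse is immediate by symmetry: if $P\mathcal{M}P^T$ is (strictly) semipositive, apply the forward direction with the permutation matrix $P^T$ (whose transpose is $P$) to the tensor $P\mathcal{M}P^T$, obtaining that $P^T(P\mathcal{M}P^T)P = \mathcal{M}$ is (strictly) semipositive.

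The main obstacle I anticipate is purely bookkeeping: carefully verifying the entrywise description of $P\mathcal{M}P^T$ and the index identity $(\mathcal{N}u^{r-1})_i = (\mathcal{M}(P^Tu)^{r-1})_{\sigma(i)}$, since Shao's general product of a matrix, a tensor, and a matrix requires tracking how the $r-1$ trailing slots are each contracted against a copy of $P^T$ while the leading slot is contracted against $P$. Once that identity is pinned down, everything else is a one-line consequence of the definition, so I would present the computation as a short lemma-style paragraph and then state the two implications tersely. An alternative, slightly slicker route avoids entrywise computation entirely: observe that for the operator $u\mapsto \mathcal{M}u^{r-1}$, conjugation by $P$ corresponds precisely to relabelling coordinates by $\sigma$, and semipositivity is a property phrased purely in terms of signs of coordinates of $u$ and of $\mathcal{M}u^{r-1}$ — a property manifestly invariant under simultaneous relabelling of both. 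I would likely include the entrywise identity for rigor but phrase the conclusion in this invariance language.
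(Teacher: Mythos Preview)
Your proposal is correct and follows essentially the same approach as the paper: set $v=P^{T}u$, apply the (strict) semipositivity of $\mathcal{M}$ to $v$, and translate the resulting index back through the permutation via the identity $(P\mathcal{M}P^{T}u^{r-1})_{i}=(\mathcal{M}(P^{T}u)^{r-1})_{\sigma(i)}$, with the converse handled by symmetry using that $P^{T}$ is again a permutation matrix. If anything, your bookkeeping on the entrywise description of $P\mathcal{M}P^{T}$ and the identity $(P^{T}u)_{k}=u_{\sigma^{-1}(k)}$ is more careful than the paper's somewhat loose index tracking.
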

\begin{proof}
Let $\mathcal{M} \in \mathbb{T}_{r,n}$ be a (strictly) semipositive tensor. Let
$P\in \mathbb{R}^{n\times n}$ be a permutation matrix such that for any vector $v \in \mathbb{R}^n,$ $(P v)_k = v_{\sigma(k)}$ where $\sigma$ is a permutation on the set of indices $I_n.$ Let $0\neq u \geq 0$ where $u \in \mathbb{R}^n.$ Let $v= P^T u.$ Then $v\geq 0.$ As $\mathcal{M}$ is a (strictly) semipositive tensor $\exists \; l$ such that $v_l >0$ and $(\mathcal{M}v^{r-1})_l\geq 0 \; (>0).$ Thus $(P \mathcal{M} P^T u^{r-1})_{\sigma(l)}=(\mathcal{M} P^T u^{r-1})_l = (\mathcal{M} v^{r-1})_l\geq 0 \; (>0).$ Also $u_{\sigma(l)}=(P^T u)_l=v_l >0.$ Therefore $P \mathcal{M} P^T$ is (strictly) semipositive tensor.

Conversely, suppose $P\mathcal{M}P^T$ is (strictly) semipositive tensor. We have $\mathcal{M} =P(P\mathcal{M}P^T)P^T$, since $P$ is a permutation matrix. Hence by the forward part of the proof $\mathcal{M}$ is (strictly) semipositive tensor.
\end{proof}

Now we prove the necessary and sufficient conditions for a tensor to be semipositive tensor.

\begin{theorem}
Let $\mathcal{M}, \mathcal{N} \in \mathbb{T}_{r,n}$ where $\mathcal{M}$ is a (strictly) semipositive tensor and $\mathcal{N}\geq \mathcal{O}$. Then $\mathcal{M} + \mathcal{N}$ is (strictly) semipositive tensor.
\end{theorem}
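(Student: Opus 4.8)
The plan is to prove the statement directly from the definition of a (strictly) semipositive tensor, exploiting the fact that adding a nonnegative tensor can only increase the relevant coordinates of $\mathcal{M}u^{r-1}$ when $u \geq 0$. First I would fix an arbitrary $u \in \mathbb{R}^n_+ \setminus \{0\}$. Since $\mathcal{M}$ is (strictly) semipositive, there is an index $l \in I_n$ with $u_l > 0$ and $(\mathcal{M}u^{r-1})_l \geq 0$ (resp. $> 0$).

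The key observation is that $\mathcal{N} \geq \mathcal{O}$ together with $u \geq 0$ forces $(\mathcal{N}u^{r-1})_i = \sum_{i_2,\dots,i_r} n_{i i_2 \dots i_r} u_{i_2} \cdots u_{i_r} \geq 0$ for every $i \in I_n$, because each summand is a product of nonnegative numbers. In particular $(\mathcal{N}u^{r-1})_l \geq 0$. Then, using linearity of the map $v \mapsto \mathcal{M}v^{r-1}$ in the sense that $((\mathcal{M}+\mathcal{N})u^{r-1})_l = (\mathcal{M}u^{r-1})_l + (\mathcal{N}u^{r-1})_l$, I conclude $((\mathcal{M}+\mathcal{N})u^{r-1})_l \geq (\mathcal{M}u^{r-1})_l \geq 0$ in the semipositive case, and $((\mathcal{M}+\mathcal{N})u^{r-1})_l \geq (\mathcal{M}u^{r-1})_l > 0$ in the strictly semipositive case. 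Since the same index $l$ still satisfies $u_l > 0$, the witness index for $\mathcal{M}+\mathcal{N}$ is exactly the one inherited from $\mathcal{M}$, and the definition is verified.

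There is really no main obstacle here; the only point requiring a line of care is the elementary identity $((\mathcal{M}+\mathcal{N})u^{r-1})_i = (\mathcal{M}u^{r-1})_i + (\mathcal{N}u^{r-1})_i$, which follows immediately from the componentwise definition of $\mathcal{M}u^{r-1}$ given in the Preliminaries, and the sign claim $(\mathcal{N}u^{r-1})_i \geq 0$, which uses only $u \in \mathbb{R}^n_+$ and $\mathcal{N} \geq \mathcal{O}$. I would state both facts explicitly before assembling the argument so that the proof is self-contained.
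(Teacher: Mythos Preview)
Your proposal is correct and follows essentially the same route as the paper's own proof: pick $u\in\mathbb{R}^n_+\setminus\{0\}$, use the (strictly) semipositive hypothesis on $\mathcal{M}$ to obtain a witness index $l$, observe that $(\mathcal{N}u^{r-1})_l\geq 0$ because $\mathcal{N}\geq\mathcal{O}$ and $u\geq 0$, and add. The only cosmetic remark is that the phrase ``linearity of the map $v\mapsto\mathcal{M}v^{r-1}$'' is a slight misnomer---what you actually use (and correctly state) is linearity in the \emph{tensor} argument, i.e.\ $((\mathcal{M}+\mathcal{N})u^{r-1})_i=(\mathcal{M}u^{r-1})_i+(\mathcal{N}u^{r-1})_i$.
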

\begin{proof}
Suppose $\mathcal{M} \in \mathbb{T}_{r,n}$ is (strictly) semipositive tensor. Then for $0\neq u\geq 0 \; \exists \;l \in I_n$ such that $u_l>0$ and $(\mathcal{M}u^{r-1})_l\geq 0 \; (>0).$ Let $\mathcal{N} \in \mathbb{T}_{r,n}$ where $\mathcal{N}\geq \mathcal{O}.$ Then for $0\neq u \in \mathbb{R}^n_+$ we have $(\mathcal{N}u^{r-1})_i\geq 0,\; \forall \; i \in I_n.$ Therefore $((\mathcal{M} + \mathcal{N})u^{r-1})_l=(\mathcal{M}u^{r-1})_l + (\mathcal{N}u^{r-1})_l\geq0 \; (>0).$ Hence, $\mathcal{M} + \mathcal{N}$ is (strictly) semipositive tensor.
\end{proof}

Here we discuss a necessary and sufficient condition for (strictly) semipositive tensor.

\begin{theorem}
Let $\mathcal{M} \in \mathbb{T}_{r,n}$ be a tensor such that all its proper principal subtensors are (strictly) semipositive tensor. $\mathcal{M}$ is (strictly) semipositive tensor if and only if for all diagonal tensors $\mathcal{D}\in \mathbb{T}_{r,n}$ with $\mathcal{D} > \mathcal{O} \;(\geq \mathcal{O})$, the tensor $\mathcal{M} +\mathcal{D}$ does not have a positive null vector.
\end{theorem}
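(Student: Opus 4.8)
The plan is to prove both directions by contraposition, using Definition of (strictly) semipositive tensor together with the fact that a positive null vector of $\mathcal{M}+\mathcal{D}$ is exactly a positive vector $u$ for which $((\mathcal{M}+\mathcal{D})u^{r-1})_i = 0$ for every $i \in I_n$, i.e. $(\mathcal{M}u^{r-1})_i = -(\mathcal{D}u^{r-1})_i$ for all $i$. Since $\mathcal{D}$ is diagonal with diagonal entries $d_i$, we have $(\mathcal{D}u^{r-1})_i = d_i u_i^{r-1}$, so the null-vector condition reads $(\mathcal{M}u^{r-1})_i = -d_i u_i^{r-1}$ for all $i$. With $u > 0$ this forces $(\mathcal{M}u^{r-1})_i$ to have the sign of $-d_i$ coordinatewise; the strict case $\mathcal{D} > \mathcal{O}$ gives $(\mathcal{M}u^{r-1})_i < 0$ for all $i$, and the non-strict case $\mathcal{D} \ge \mathcal{O}$ gives $(\mathcal{M}u^{r-1})_i \le 0$ for all $i$. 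This is the bridge between the two formulations.

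For the forward (``only if'') direction I would argue the contrapositive: suppose there is a diagonal tensor $\mathcal{D} > \mathcal{O}$ (resp. $\mathcal{D} \ge \mathcal{O}$) such that $\mathcal{M}+\mathcal{D}$ has a positive null vector $u > 0$. By the displayed identity, $(\mathcal{M}u^{r-1})_i < 0$ for every $i$ (resp. $\le 0$ for every $i$). In the strict-semipositive case this directly contradicts Definition of strictly semipositive tensor applied to $u \in \mathbb{R}^n_+ \setminus \{0\}$, since it would require some index $l$ with $u_l > 0$ and $(\mathcal{M}u^{r-1})_l > 0$. In the semipositive case, $(\mathcal{M}u^{r-1})_i \le 0$ for all $i$ together with $u > 0$ means $\mathcal{M}$ fails the semipositivity test at $u$ unless some coordinate is exactly zero; this is where the hypothesis on proper principal subtensors and the non-strictness of $\mathcal{D} \ge \mathcal{O}$ must be used carefully — one takes $\mathcal{D}$ supported only on coordinates where $(\mathcal{M}u^{r-1})_i$ is strictly negative and zero elsewhere, forcing a genuine violation.

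For the converse (``if'') direction, again by contraposition: assume $\mathcal{M}$ is not (strictly) semipositive. Then there is $u \in \mathbb{R}^n_+ \setminus \{0\}$ such that for every index $l$ with $u_l > 0$ one has $(\mathcal{M}u^{r-1})_l < 0$ (resp. $\le 0$). First, using the hypothesis that all proper principal subtensors are (strictly) semipositive together with Theorem on principal subtensors, one shows the witnessing vector $u$ must in fact be \emph{strictly} positive: if the support $J = \{i : u_i > 0\}$ were a proper subset of $I_n$, then $u_J$ would witness the failure of (strict) semipositivity of the principal subtensor $\mathcal{M}^J_{|J|}$, contradicting the hypothesis. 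So $u > 0$ and $(\mathcal{M}u^{r-1})_i < 0$ for all $i$ (resp. $\le 0$ for all $i$). Now define the diagonal tensor $\mathcal{D}$ by $d_i = -(\mathcal{M}u^{r-1})_i / u_i^{r-1}$; then $d_i > 0$ (resp. $d_i \ge 0$), so $\mathcal{D} > \mathcal{O}$ (resp. $\mathcal{D} \ge \mathcal{O}$), and by construction $u$ is a positive null vector of $\mathcal{M}+\mathcal{D}$, contradicting the right-hand side.

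The main obstacle is the non-strict semipositive case: there the null-vector condition only yields $(\mathcal{M}u^{r-1})_i \le 0$, which is \emph{not} by itself a violation of semipositivity (the definition permits coordinates with $u_l > 0$ and $(\mathcal{M}u^{r-1})_l = 0$). One must exploit that the statement quantifies over \emph{all} $\mathcal{D} \ge \mathcal{O}$ and that the subtensor hypothesis forces the bad vector to be strictly positive, so that a carefully chosen $\mathcal{D}$ (possibly with some zero diagonal entries, matched to the zero coordinates of $\mathcal{M}u^{r-1}$) produces a genuine positive null vector; conversely, in deriving a contradiction from an existing positive null vector of $\mathcal{M}+\mathcal{D}$ with $\mathcal{D}\ge\mathcal{O}$, one restricts attention to the (nonempty, by $\mathcal{D}\ne\mathcal{O}$ being not required — so this needs the subtensor hypothesis again) index set where $d_i>0$. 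Handling this bookkeeping cleanly, and making sure the $r-1$ powers of positive numbers stay positive so the division defining $\mathcal{D}$ is legitimate, is the delicate part; everything else is the routine translation recorded in the first paragraph.
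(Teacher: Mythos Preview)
Your overall strategy---contraposition in both directions, the identity $(\mathcal{M}u^{r-1})_i=-d_i u_i^{r-1}$ for a positive null vector, using the proper-subtensor hypothesis to force $u>0$ in the converse, and then defining $d_i=-(\mathcal{M}u^{r-1})_i/u_i^{r-1}$---is exactly the paper's approach.

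However, you have tangled the parenthetical pairing in the forward direction and thereby invented an obstacle that is not there. In the theorem, the \emph{semipositive} version is paired with $\mathcal{D}>\mathcal{O}$ (strictly positive diagonal), and the \emph{strictly semipositive} version with $\mathcal{D}\ge\mathcal{O}$. So in the semipositive forward direction, a positive null vector $u$ of $\mathcal{M}+\mathcal{D}$ with $\mathcal{D}>\mathcal{O}$ gives $\mathcal{M}u^{r-1}=-\mathcal{D}u^{r-1}<0$ \emph{strictly} in every coordinate; since $u>0$, this immediately violates semipositivity. There is no ``$\le 0$ but possibly $=0$'' issue, no need to choose $\mathcal{D}$ supported on a subset, and no appeal to the subtensor hypothesis in this direction. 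The paper's proof of the forward implication is accordingly a two-line computation and does not invoke the subtensor hypothesis at all; that hypothesis is used only in the converse, precisely to force the witnessing vector to be strictly positive, as you correctly argue. Once you fix the pairing, your ``main obstacle'' paragraph can be deleted and the proof matches the paper's.
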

\begin{proof}

% we are proving the contrapositive statement.
Let there be a diagonal tensor $\mathcal{D}\in \mathbb{T}_{r,n}$ with $\mathcal{D} > \mathcal{O} \;(\geq \mathcal{O})$ such that $\mathcal{M} +\mathcal{D}$ has a positive null vector. i.e., There exists $0<u \in\mathbb{R}^n$ such that
\begin{align*}
  (\mathcal{M} +\mathcal{D})u^{r-1}=0
    & \implies \mathcal{M}u^{r-1} +\mathcal{D}u^{r-1}=0\\
    & \implies \mathcal{M}u^{r-1} = -\mathcal{D}u^{r-1} <0\; (\leq 0),
\end{align*}
which shows that $\mathcal{M}$ is not (strictly) semipositive tensor.% Then by contrapositive sense $\mathcal{M}$ is semipositive tensor.

Conversely, suppose $\mathcal{M}$ is not (strictly) semipositive tensor. We show that there exists a diagonal tensor $\mathcal{D}\in \mathbb{T}_{r,n}$ with $\mathcal{D} >\mathcal{O} \;(\geq \mathcal{O})$ such that the tensor $\mathcal{M} +\mathcal{D}$ has a positive null vector. Since $\mathcal{M}$ is not (strictly) semipositive tensor (but all proper principal subtensors are), $\exists\; u>0$ such that $\mathcal{M}u^{r-1} <0\; (\leq 0).$ Now we construct diagonal tensor $\mathcal{D}$ with diagonal entries being defined as $d_{i ...i} = -\frac{(\mathcal{M}u^{r-1})_i}{u_i^{r-1}}.$ Then $\mathcal{D} > \mathcal{O}\; (\geq \mathcal{O})$ and $\mathcal{M}u^{r-1} = -\mathcal{D}u^{r-1}.$ Therefore $\mathcal{M}u^{r-1} + \mathcal{D}u^{r-1} =(\mathcal{M} +\mathcal{D})u^{r-1} =0.$ Thus $\mathcal{M} + \mathcal{D}$ has a positive null vector, since $u>0.$
\end{proof}

Here we propose a necessary and sufficient condition for a semipositive tensor to be a strictly semipositive tensor.

\begin{theorem}
A tensor $\mathcal{M}\in \mathbb{T}_{r,n}$ is semipositive tensor if and only if for every $\delta >0,$ the tensor $\mathcal{M} + \delta \mathcal{I}$ is strictly semipositive tensor.
\end{theorem}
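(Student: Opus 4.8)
The plan is to prove both directions directly from the definition of (strictly) semipositive tensor, exploiting the fact that adding $\delta\mathcal{I}$ contributes exactly $\delta u_l^{r-1}$ to the $l$-th coordinate of $\mathcal{M}u^{r-1}$.

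For the forward direction, assume $\mathcal{M}$ is semipositive and fix $\delta>0$. Take any $u\in\mathbb{R}^n_+\setminus\{0\}$. By semipositivity there is an index $l\in I_n$ with $u_l>0$ and $(\mathcal{M}u^{r-1})_l\geq 0$. I would then compute $((\mathcal{M}+\delta\mathcal{I})u^{r-1})_l = (\mathcal{M}u^{r-1})_l + \delta(\mathcal{I}u^{r-1})_l = (\mathcal{M}u^{r-1})_l + \delta u_l^{r-1}$. Since $u_l>0$ we have $u_l^{r-1}>0$, so this quantity is strictly positive; combined with $u_l>0$, this is exactly the witness needed to conclude $\mathcal{M}+\delta\mathcal{I}$ is strictly semipositive. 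I should note in passing that $(\mathcal{I}u^{r-1})_i = u_i^{r-1}$ follows immediately from the definition of the identity tensor given in the preliminaries.

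For the converse, assume that $\mathcal{M}+\delta\mathcal{I}$ is strictly semipositive for every $\delta>0$, and take any $u\in\mathbb{R}^n_+\setminus\{0\}$; I must produce an index $l$ with $u_l>0$ and $(\mathcal{M}u^{r-1})_l\geq 0$. For each $\delta>0$, strict semipositivity gives an index $l(\delta)$ with $u_{l(\delta)}>0$ and $(\mathcal{M}u^{r-1})_{l(\delta)} + \delta u_{l(\delta)}^{r-1} > 0$. Since the index set $I_n$ is finite, as $\delta$ ranges over a sequence $\delta_k\downarrow 0$ some fixed index $l$ occurs for infinitely many $k$; along that subsequence $u_l>0$ (so $u_l^{r-1}>0$) and $(\mathcal{M}u^{r-1})_l > -\delta_k u_l^{r-1}$, and letting $\delta_k\to 0$ yields $(\mathcal{M}u^{r-1})_l \geq 0$. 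This $l$ is the required witness, so $\mathcal{M}$ is semipositive.

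The only mild subtlety — and the "hard part," though it is still routine — is the pigeonhole/limiting argument in the converse: one cannot simply fix a single index across all $\delta$, so I pass to a subsequence on which the witnessing index stabilizes, using finiteness of $I_n$. Everything else is a one-line coordinate computation using $(\mathcal{I}u^{r-1})_i=u_i^{r-1}$ and the sign of $u_l^{r-1}$ when $u_l>0$.
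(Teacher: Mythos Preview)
Your proof is correct and follows essentially the same approach as the paper: the forward direction is the identical one-line computation $((\mathcal{M}+\delta\mathcal{I})u^{r-1})_l=(\mathcal{M}u^{r-1})_l+\delta u_l^{r-1}>0$, and the converse in the paper likewise fixes $u$, takes a sequence $\delta_k\downarrow 0$, uses finiteness of $I_n$ to extract a common witnessing index $l$ along a subsequence, and passes to the limit to get $(\mathcal{M}u^{r-1})_l\geq 0$. If anything, your write-up is slightly more explicit about the pigeonhole step and about why $u_l>0$ forces $u_l^{r-1}>0$.
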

\begin{proof}
Let $\mathcal{M}\in \mathbb{T}_{r,n}$ be a semipositive tensor. The for every $0\neq u\geq 0, \; \exists$ an index $l$ such that $u_l >0$ and $(\mathcal{M}u^{r-1})_l \geq 0.$ Therefore $[(\mathcal{M}+ \delta \mathcal{I}) u^{r-1}]_l = (\mathcal{M}u^{r-1})_l + \delta u^{r-1}_l >0.$ Thus $\mathcal{M}+ \delta \mathcal{I}$ is a strictly semipositive tensor.

Conversely, suppose that $\mathcal{M}+ \delta \mathcal{I}$ is a strictly semipositive tensor for each $\delta >0.$ For arbitrarily chosen $0\neq u \geq 0$ consider a sequence $\{\delta_k \}$, where $\delta_k >0$ and $\delta_k$ converges to zero. Then for each $k,\; \exists\; l_k\in I_n$ such that $u_{l_k} >0$ and $[(\mathcal{M}+ \delta \mathcal{I}) u^{r-1}]_{l_k} >0.$ This implies $\exists \; l\in I_n$ such that $u_l >0$ and $(\mathcal{M}u^{r-1})_l + \delta_k u^{r-1}_l = [(\mathcal{M}+ \delta_k \mathcal{I}) u^{r-1}]_l >0$ for infinitely many $\delta_k.$ Since $\delta_k\rightarrow 0$ when $k\rightarrow \infty$ we conclude that $\mathcal{M}u^{r-1}_l \geq 0.$ Hence $\mathcal{M}$ is semipositive tensor.
\end{proof}

Let $I$ denote the identity matrix of order $n$ and $[0, I]$ denote all $n \times n$ diagonal matrices whose diagonal entries are in $[0, 1].$ Now we establish a necessary and sufficient condition for strictly semipositive tensor.

\begin{theorem}
Let $\mathcal{M}\in \mathbb{T}_{r,n}$ and $r$ is even. $\mathcal{M}$ is strictly semipositive tensor if and only if $\forall\; G \in [0, I]$ the tensor $G\mathcal{I}+(I−G)\mathcal{M}$ has no nonzero nonnegative null vector.
\end{theorem}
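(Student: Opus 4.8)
The plan is to reduce the statement to Theorem~3.3(c), the characterization of strict semipositivity via non-solvability of the systems $\mathcal{M}^J_r u_J^{r-1} \le 0$, $u_J \ge 0$, $u_J \ne 0$. The bridge is the observation that for $G \in [0,I]$ with diagonal entries $g_1,\dots,g_n$, a nonzero nonnegative null vector $u$ of $G\mathcal{I} + (I-G)\mathcal{M}$ satisfies, coordinatewise, $g_i u_i^{r-1} + (1-g_i)(\mathcal{M}u^{r-1})_i = 0$ for every $i \in I_n$. First I would record this and split the index set according to $J = \{\, i : u_i > 0 \,\}$, which is nonempty since $u \ne 0$, and note that on $J^c$ the corresponding coordinates of $u$ vanish so the null-vector equation is automatic there once we work with the principal subtensor $\mathcal{M}^J_{|J|}$ (here one uses that $r$ being even makes $u_i^{r-1}$ have the same sign as $u_i$, so the quadratic-form-style sign bookkeeping goes through; also the convention $|J| = r$ in Theorem~3.3(c) should be read as $J$ ranging over all index subsets, matching the row-subtensor statements, or else one first invokes Theorem~3.1 to pass to subtensors).

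For the ``only if'' direction I would argue contrapositively: suppose some $G \in [0,I]$ gives a nonzero nonnegative null vector $u$. On $J = \{i : u_i > 0\}$ we get $g_i u_i^{r-1} = -(1-g_i)(\mathcal{M}^J u_J^{r-1})_i$ for $i \in J$. Since $g_i \ge 0$, $1 - g_i \ge 0$, $u_i^{r-1} > 0$, the left side is $\ge 0$, hence $(1-g_i)(\mathcal{M}^J u_J^{r-1})_i \le 0$ for all $i \in J$. If $g_i < 1$ this forces $(\mathcal{M}^J u_J^{r-1})_i \le 0$; the case $g_i = 1$ forces $u_i^{r-1} = 0$, i.e. $u_i = 0$, contradicting $i \in J$, so in fact $g_i < 1$ on $J$ and we obtain $\mathcal{M}^J_{|J|} u_J^{r-1} \le 0$ with $u_J \ge 0$, $u_J \ne 0$. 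By Theorem~3.3(c) (applied to the principal subtensor, which by Theorem~3.1 is strictly semipositive if $\mathcal{M}$ is), $\mathcal{M}$ cannot be strictly semipositive.

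For the ``if'' direction, again contrapositively: if $\mathcal{M}$ is not strictly semipositive, Theorem~3.3(c) hands us an index set $J$ and $0 \ne u_J \ge 0$ with $\mathcal{M}^J_{|J|} u_J^{r-1} \le 0$. Extend $u_J$ by zeros to $u \in \mathbb{R}^n_+ \setminus \{0\}$. Now I would build $G$ coordinatewise: on $J$ set $g_i = \dfrac{-(\mathcal{M}u^{r-1})_i}{u_i^{r-1} - (\mathcal{M}u^{r-1})_i}$, which lies in $[0,1)$ precisely because $(\mathcal{M}u^{r-1})_i \le 0$ and $u_i^{r-1} > 0$ make numerator $\ge 0$ and denominator $>0$ with numerator $\le$ denominator; one checks directly that this choice yields $g_i u_i^{r-1} + (1-g_i)(\mathcal{M}u^{r-1})_i = 0$. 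Off $J$ set $g_i$ arbitrarily in $[0,1]$ (say $g_i = 0$), and the $i$-th null equation holds there since $u_i = 0$ and $(\mathcal{M}u^{r-1})_i$ depends only on coordinates of $u$, which vanish outside $J$—so it equals $(\mathcal{M}^J u_J^{r-1})$-type quantities, hence the off-$J$ rows also give zero once $u_i = 0$ kills the $g_i u_i^{r-1}$ term; care is needed here that $(\mathcal{M} u^{r-1})_i = 0$ for $i \notin J$, which follows because each summand contains some factor $u_{i_k}$ with $i_k \notin J$—wait, that is not automatic, so instead I set $g_i = 1$ for $i \notin J$, making the equation $u_i^{r-1} = 0$, true since $u_i = 0$. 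Then $u$ is a nonzero nonnegative null vector of $G\mathcal{I} + (I-G)\mathcal{M}$.

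The main obstacle is precisely the off-$J$ bookkeeping just flagged: one must choose $g_i = 1$ (not $0$) outside $J$ so that the null-vector equation reduces to $u_i^{r-1}=0$, which holds trivially, rather than demanding $(\mathcal{M}u^{r-1})_i = 0$ which need not hold. The other delicate point is the role of $r$ even: it guarantees $u_i^{r-1} > 0 \iff u_i > 0$ and keeps all the sign inequalities clean; for odd $r$ nonnegativity already gives $u_i^{r-1} \ge 0$, so the same argument may in fact work, but I would not belabor this and simply use the evenness hypothesis as stated. Everything else is the routine verification that the constructed $G$ does what it should.
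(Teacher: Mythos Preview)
Your argument is essentially the paper's: both directions hinge on the same coordinatewise analysis, with the key formula $g_i=\dfrac{-v_i}{u_i^{r-1}-v_i}$ (where $v=\mathcal{M}u^{r-1}$) used to build $G$ in the contrapositive of the ``if'' direction, and the sign constraints $0\le g_i\le 1$ forcing $v_l\le 0$ at every index with $u_l>0$ in the ``only if'' direction. The paper simply works straight from the definition of strict semipositivity rather than routing through the characterization you call Theorem~3.3(c); that detour is harmless but unnecessary.

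One genuine (small) gap: in your ``if'' direction you assert $u_i^{r-1}>0$ for $i\in J$ so that the formula for $g_i$ is well-defined and lands in $[0,1)$, but the cited theorem only gives $0\ne u_J\ge 0$, so some coordinates of $u_J$ may vanish. The fix is immediate---replace $J$ by the support of $u$---but you should say so; the paper handles exactly this by its four-case definition of $g_{ii}$ (taking $g_{ii}=1$ whenever $u_i=0$ and $v_i\ne 0$, and $g_{ii}=0$ when both vanish). Your self-correction setting $g_i=1$ off $J$ is right and matches the paper's treatment.
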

\begin{proof}
Let $\mathcal{M}$ be a strictly semipositive tensor and $\forall\; G \in [0, I], \; \exists\; 0\neq u\geq 0$ which is a null vector of the tensor $G\mathcal{I}+(I−G)\mathcal{M}.$ i.e. $(G\mathcal{I}+(I−G)\mathcal{M})u^{r-1} =0.$ Let $v= \mathcal{M}u^{r-1}.$ As $\mathcal{M}$ is strictly semipositive tensor $\exists\; l\in I_n$ such that  $u_l>0$ and $v_l=(\mathcal{M}u^{r-1})_l >0.$ Now 
\begin{align*}
    [G\mathcal{I} + (I-G)\mathcal{M}]u^{r-1} =0 &\implies g_{ll} u_l^{r-1} + (1-g_{ll})(\mathcal{M}u^{r-1})_l =0\\
                                     &\implies g_{ll} (u_l^{r-1} - v_l)= -v_l. 
\end{align*}
Note that if $u_l^{r-1} = v_l$ then $v_l = 0$ which contradicts the fact that $v_l>0.$ Therefore $u_l^{r-1} - v_l\neq 0$ and $g_{ll}= \frac{- v_l}{u_l^{r-1} - v_l}.$ Now if $u_l^{r-1} - v_l >0$ then we have $-v_l\geq0,$ since $g_{ll} \geq 0.$ Therefore $v_l\leq 0,$ which is again a contradiction, since $v_l>0$. 
Let $u_l^{r-1} - v_l <0.$ Now since $g_{ll}\leq 1,$ we have $\frac{- v_l}{u_l^{r-1} - v_l} \leq 1 \implies u_l^{r-1} - v_l \leq -v_l \implies u_l\leq 0$ (since $r$ is even) which contradicts the fact that $u_l>0.$ Hence no nonzero nonnegative null vector exists.

Conversely, let $\mathcal{M}$ be not a strictly semipositive tensor. Then $\exists\;0 \neq u \geq0$ such that $u_i (\mathcal{M}u^{r-1})_i \leq 0 \; \forall \;i \in I_n.$ Let $v=\mathcal{M}u^{r-1}.$ Then for $u_l>0$ we have either $v_l=0$ or $v_l<0.$ Note that if $u_l =0,$ then there are three choices for $v_l$, they are $v_l=0,\;v_l>0 \text{ or } v_l<0 .$ Now we define diagonal a matrix $G$ with the diagonal elements as
  $g_{ii} = \left\{\begin{array}{cc}
      0 & \text{if } u_i > 0,\; v_i = 0\\
   \frac{-v_i}{u_i^{r-1} -v_i} & \text{if } u_i > 0,\; v_i < 0\\
      0 & \text{if } u_i=0,\; v_i=0\\
      1 & \text{if } u_i=0,\; v_i \lessgtr 0
  \end{array} \right. .$ 
Note that if $u_i>0$ and $v_i<0,$ then $0\leq \frac{-v_i}{u_i^{r-1}-v_i}\leq 1.$ If not, then there are two possibilities either $\frac{-v_i}{u_i^{r-1} -v_i}<0$ or $\frac{-v_i}{u_i^{r-1}-v_i}> 1.$ Now $\frac{-v_i}{u_i^{r-1}-v_i}<0$ implies $v_i>0$, since $u_i^{r-1}-v_i>0.$ This leads to a contradiction.
% Actually $v_i>0$ but for $v_i =0$ it also gives an contradiction.
Again if $\frac{-v_i}{u_i^{r-1} -v_i}>1$ then we have $-v_i > u_i^{r-1}-v_i \implies u_i^{r-1}<0,$ which is again a contradiction. Thus we have $0\leq g_{ii}\leq 1$ and $[(I-G)\mathcal{M}u^{r-1}]_i = (1-g_{ii})v_i = -g_{ii} u_i^{r-1}= -(G \mathcal{I}u^{r-1})_i.$ Therefore $[G\mathcal{I} + (I-G)\mathcal{M}]u^{r-1}=0.$ i.e. There exists a diagonal matrix $G$ with diagonal entries from $[0,1]$ such that the tensor $[G\mathcal{I} + (I-G)\mathcal{M}]$ has a nonzero nonnegative null vector.
\end{proof}

Here we establish a relation between an even order row diagonal (strictly) semipositive tensor and its majorization matrix.

\begin{theorem}
Let $\mathcal{M}\in \mathbb{T}_{r,n}$ be a row diagonal tensor of even order. $\mathcal{M}$ is (strictly) semipositive tensor if and only if $\Tilde{\mathcal{M}}\in \mathbb{R}^{n\times n}$ is a (strictly) semimonotone matrix.
\end{theorem}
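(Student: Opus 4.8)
The plan is to exploit the structural identity $\mathcal{M} = \tilde{\mathcal{M}}\mathcal{I}$ for a row diagonal tensor (the lemma of Shao cited above) to translate the defining inequality of a (strictly) semipositive tensor into the defining inequality of a (strictly) semimonotone matrix. First I would compute $(\mathcal{M}u^{r-1})_i$ explicitly for a row diagonal tensor: since $R_i(\mathcal{M})$ is diagonal, only the terms $m_{ij\cdots j}$ survive, so $(\mathcal{M}u^{r-1})_i = \sum_{j=1}^n m_{ij\cdots j}\, u_j^{r-1} = \sum_{j=1}^n \tilde{\mathcal{M}}_{ij}\, u_j^{r-1} = (\tilde{\mathcal{M}}\, u^{[r-1]})_i$. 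Thus $\mathcal{M}u^{r-1} = \tilde{\mathcal{M}}\, u^{[r-1]}$, where $u^{[r-1]} = (u_1^{r-1},\dots,u_n^{r-1})^T$ in the notation introduced in the preliminaries.

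The second step is the substitution $z = u^{[r-1]}$. Because $r$ is even, $r-1$ is odd, so the map $u \mapsto u^{[r-1]}$ is a bijection of $\mathbb{R}^n$ onto itself, and crucially it maps $\mathbb{R}^n_+\setminus\{0\}$ bijectively onto $\mathbb{R}^n_+\setminus\{0\}$, with $u_l > 0 \iff z_l > 0$ and $u_l = 0 \iff z_l = 0$. (Here I would note that oddness of $r-1$ is what makes this work; for even $r-1$ the sign information would be lost, which is why the hypothesis "$r$ even" appears.) Hence the statement "for every $0 \neq u \geq 0$ there is an index $l$ with $u_l > 0$ and $(\mathcal{M}u^{r-1})_l \geq 0\ (>0)$" becomes, under this substitution, "for every $0 \neq z \geq 0$ there is an index $l$ with $z_l > 0$ and $(\tilde{\mathcal{M}} z)_l \geq 0\ (>0)$", which is exactly the definition of $\tilde{\mathcal{M}}$ being (strictly) semimonotone. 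Running this equivalence in both directions gives the theorem.

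I would present the forward direction ($\mathcal{M}$ semipositive $\Rightarrow$ $\tilde{\mathcal{M}}$ semimonotone) and the converse in a single paragraph each, since after the identity $\mathcal{M}u^{r-1} = \tilde{\mathcal{M}}u^{[r-1]}$ is established both are immediate: given $0 \neq z \geq 0$, set $u = z^{[1/(r-1)]}$ (well-defined and nonnegative, nonzero, since $r-1$ is odd), apply semipositivity of $\mathcal{M}$ to $u$, and read off the conclusion for $z$; and symmetrically in the other direction. The parenthetical "(strictly)" is carried uniformly through both arguments.

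The only genuine obstacle is making sure the role of the parity hypothesis is handled cleanly: I must justify that $u\mapsto u^{[r-1]}$ is a sign-preserving bijection on $\mathbb{R}^n$ and in particular preserves the support and the positivity pattern of a vector, which requires $r-1$ odd, i.e. $r$ even. Everything else — the expansion of $(\mathcal{M}u^{r-1})_i$ for a row diagonal tensor and the recognition of $\sum_j \tilde{\mathcal{M}}_{ij} u_j^{r-1}$ as a matrix-vector product — is a routine unwinding of definitions, and the cited lemma $\mathcal{M} = \tilde{\mathcal{M}}\mathcal{I}$ can be invoked to shortcut even that computation.
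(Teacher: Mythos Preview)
Your proposal is correct and follows essentially the same approach as the paper: both establish the identity $\mathcal{M}u^{r-1}=\tilde{\mathcal{M}}\,u^{[r-1]}$ from the row diagonal hypothesis (equivalently $\mathcal{M}=\tilde{\mathcal{M}}\mathcal{I}$), and then use the bijection $u\mapsto u^{[r-1]}$ on $\mathbb{R}^n_+\setminus\{0\}$ (valid because $r-1$ is odd) to pass between the semipositive-tensor condition and the semimonotone-matrix condition in both directions. Your treatment of the parity hypothesis is slightly more explicit than the paper's, but the argument is the same.
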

\begin{proof}
Let $\mathcal{M}\in \mathbb{T}_{r,n}$ be an even order row diagonal tensor. i.e., $r$ is even. Let $\Tilde{\mathcal{M}}\in \mathbb{R}^{n\times n}$ be the majorization matrix of the tensor $\mathcal{M}.$ Then for $u \in \mathbb{R}^n$ we have $\mathcal{M} u^{r-1} = \Tilde{\mathcal{M}}\mathcal{I} u^{r-1}= \Tilde{\mathcal{M}}u^{[r-1]}.$ Let $\mathcal{M}$ be a (strictly) semipositive tensor. We prove that $\Tilde{\mathcal{M}}$ is (strictly) semimonotone matrix. Now for $0\neq v \geq 0,$ $v^{[\frac{1}{r-1}]}$ exists uniquely with $0\neq v^{[\frac{1}{r-1}]}\geq 0,$ since $r$ is even. Let $u=v^{[\frac{1}{r-1}]}.$ Since $\mathcal{M}$ is a (strictly) semipositive tensor, for $0\neq u \geq 0\; \exists \;l$ such that $u_l >0$ and $(\mathcal{M} u^{r-1})_l =(\Tilde{\mathcal{M}}u^{[r-1]})_l \geq 0\; (>0 )$ This implies $u_l^{r-1} >0$ and $(\Tilde{\mathcal{M}}u^{[r-1]})_l \geq 0 \;(>0).$ i.e., For $0\neq v \geq 0\; \exists \;l$ such that $ v_l=u_l^{r-1} >0$ and $(\Tilde{\mathcal{M}}v)_l =(\Tilde{\mathcal{M}}u^{[r-1]})_l \geq 0 \;(>0).$ This implies that $\Tilde{\mathcal{M}}$ is a (strictly) semimonotone matrix.

Conversely, let $\Tilde{\mathcal{M}}$ be a (strictly) semimonotone matrix. Since $\mathcal{M}= \Tilde{\mathcal{M}}\mathcal{I},$ for $x\in \mathbb{R}^n$ we have
\begin{equation}\label{equation for row diag}
    (\mathcal{M}x^{r-1})_i =(\Tilde{\mathcal{M}}x^{[r-1]})i, \; \forall \; i\in I_n.
\end{equation}
We choose $x \in \mathbb{R}^n$ such that $0\neq x\geq 0$ and construct the vector $y=x^{[r-1]}.$ Then $0\neq y \geq 0.$ Since $\Tilde{\mathcal{M}}$ is a (strictly) semimonotone matrix $\exists \; t \in I_n$ such that $y_t >0$ and $(\Tilde{\mathcal{M}}y)_t \geq 0 \;(>0).$ i.e., For $0\neq x\geq 0\; \exists\; t \in I_n$ such that $x^{r-1}_t >0$ and $(\mathcal{M}x^{r-1})_t \geq 0 \;(>0)$ by equation \ref{equation for row diag}. Therefore for $0\neq x\geq 0\; \exists\; t \in I_n$ such that $x_t >0$ (since $r$ is even) and $(\mathcal{M}x^{r-1})_t \geq 0 \;(>0).$ Hence $\mathcal{M}$ is (strictly) semipositive tensor.
\end{proof}

Not all even order semipositive tensors are row diagonal. Here we give an example of a semipositive tensor which is not row diagonal tensor.
\begin{examp}
Let $\mathcal{M}\in \mathbb{T}_{4,3}$ be such that $m_{1122}= 2,\; m_{1131}= 2,\; m_{2211}=1,\; m_{2112}= -4,\; m_{2322}=-1,\; m_{3232}= -1,\; m_{3322}=1,\; m_{3313}=3$ and all other entries of $\mathcal{M}$ are zero. Then for $u =\left(\begin{array}{c} u_1 \\ u_2\\ u_3 \end{array} \right) \in \mathbb{R}^3$ we have $\mathcal{M}u^3 = \left(\begin{array}{c} 2u_1 u_2^2 + 2u_1^2 u_3 \\ -3u_1^2 u_2 - u_2^2 u_3 \\ 3u_1 u_3^2 \end{array} \right).$
The tensor $\mathcal{M}$ is a semipositive tensor of even order but $\mathcal{M}$ is not a row diagonal tensor.
\end{examp}

\section{Conclusion}
In this article, we establish some tensor theoretic properties of semipositive tensor and strictly semipositive tensor. We show invariance property of (strictly) semipositive tensor with respect to diagonal matrix and permutation matrix. We show that $\mathcal{M}\in \mathbb{T}_{r,n}$ is semipositive tensor if and only if for every $\delta >0,$ the tensor $\mathcal{M} + \delta \mathcal{I}$ is strictly semipositive tensor. We propose a necessary and sufficient conditions for (strictly) semipositive tensor. Furthermore, we prove that an even order row diagonal tensor is (strictly) semipositive tensor if and only if its majorization matrix is (strictly) semimonotone matrix.

\section{Acknowledgment}
%The author A. Dutta is thankful to the Department of Science and technology, Govt. of India, INSPIRE Fellowship Scheme for financial support.
The author R. Deb is thankful to the Council of Scientific $\&$ Industrial Research (CSIR), India, Junior Research Fellowship scheme for financial support.

\bibliographystyle{plain}
\bibliography{referencesall}

\end{document}